\numberwithin{equation}{section}
\def\R{\mathbb{R}}
\def\P{\mathbb{P}}
\def\E{\mathbb{E}}
\def\N{\mathbb{N}}
\def\kasten{$~~\mbox{\hfil\vrule height6pt width5pt depth-1pt}$ }
\newtheorem{Theorem}{Theorem}[section]
\newtheorem{Definition}[Theorem]{Definition}
\newtheorem{Lemma}[Theorem]{Lemma}
\begin{document}

\def\kasten{$~~\mbox{\hfil\vrule height6pt width5pt depth-1pt}$ }
\makeatletter\def\theequation{\arabic{section}.\arabic{equation}}
\makeatother
\newtheorem{theorem}{Theorem}

\centerline{\large \bf A parameter estimator based on Smoluchowski-Kramers approximation
 \footnote{This work was partly supported by the NSF grant 1620449 and the NSFC grants 11531006 and 11771449.}\hspace{2mm}
\vspace{1mm}\vspace{1mm}\\ }
\smallskip
\centerline{\bf Ziying He$^{a,}\footnote{ziyinghe@hust.edu.cn}$, 
\bf Jinqiao Duan$^{d,}\footnote{Corresponding author, duan@iit.edu}$
\bf Xiujun Cheng$^{a,}$\footnote{xiujuncheng@hust.edu.cn}}
\smallskip
 \centerline{${}^a$ Center for Mathematical Sciences,}
 \centerline{$\&$ School of Mathematics and Statistics,}
 \centerline{$\&$ Hubei Key Laboratory for Engineering Modeling and Scientific Computing}
\centerline{Huazhong University of Sciences and Technology, Wuhan 430074,  China}
\centerline{${}^d$Department of Applied Mathematics, Illinois Institute of Technology, Chicago, IL 60616, USA}


\begin{abstract}
We devise a simplified parameter estimator for a second order stochastic differential equation by a first order system based on the Smoluchowski-Kramers approximation. We establish the consistency of the estimator by using $\Gamma$-convergence theory. We further illustrate our estimation method by an experimentally studied movement model of a colloidal particle immersed in water under conservative force and constant diffusion.
\end{abstract}
\medskip\par\noindent
{\bf Key Words and Phrases}: Parameter estimation, second order stochastic differential equation, Smoluchowski-Kramers approximation, $\Gamma$-convergence.\\

{\footnotesize }


\section{Introduction}
We estimate an unknown system parameter in the drift term of a stochastic system satisfying Newton's law by  Smoluchowski-Kramers approximation. The Smoluchowski-Kramers approximation was introduced by Kramers \cite{Kramers H} and Smoluchowski \cite{Smoluchowski M} to compute a small parameter limit for irregularly moving particles. It has been extended to more general systems  \cite{Mark Freidlin}.
The Smoluchowski-Kramers approximation is the main justification for replacement of the second order movement equation by the first order force-distance relation equation in physical experiments. It has been used to devise a filtering method to cut the dimension by half and bypass the dependence on velocity observation 
\cite{Andrew Papanicolaou}.

In this Letter, we devise a parameter estimator for a second order stochastic system by its first order Smoluchowski-Kramers approximation system.  We verify the consistency property of the estimator by employing $\Gamma$-convergence theory, which was developed to study the optimization problems \cite{Gianni Dal Maso}. This $\Gamma$-convergence theory is about a family of functionals' convergence to a limit functional, and the corresponding convergence for the minimizers. In general, this latter convergence can not be assured. However, in our case, the objective function is verified to be $\Gamma$-convergent, and the corresponding minimizer converges to the true parameter value. Thus the consistency about the estimator holds.
We apply our parameter estimation method to determine the force in an experimental system. The force can be measured by modeling the particle with small mass through the force-distance relationship \cite{Giovanni Volpe}.

\section{Settings and assumptions}
We consider the parameter estimation on $\vartheta$ in the parameter space $\Theta\subset\R$ in the following second order Newton equation of motion under random fluctuations
\begin{eqnarray}\label{eq1}
\mu \ddot{x}^{\mu}(t)=b(x^{\mu}(t),\vartheta)-\gamma\dot{x}^{\mu}(t)+\sigma\dot{W}_{t}, \qquad x^{\mu}(0)=x_{0}\in \R^{n},\quad \dot{x}^{\mu}(0)=v_{0}\in\R^{n},
\end{eqnarray}
where $W_{t}$ is an $n$-dimensional Brownian motion \cite{JinqiaoDuan} defined on sample space $\Omega$ with probability $\P$ , $\gamma$ is the friction coefficient (a positive constant), $\sigma$ is a constant, and $b$ is a continuous drift function  from $\R^{n}\times\Theta$ to $\R^{n}$.\\
The second order equation \eqref{eq1} can be rewrite as a first order system
\begin{eqnarray}\label{eq2}
\left\{\begin{array}{l}
\dot{x}^{\mu}(t)=v^{\mu}(t),\qquad\qquad\qquad\qquad\quad\qquad\,\,\,\,\,\, x^{\mu}(0)=x_{0}\in\R^{n},\\
\dot{v}^{\mu}(t)=\frac{1}{\mu}b(x^{\mu}(t),\vartheta)-\frac{\gamma}{\mu}v^{\mu}(t)+\frac{\sigma}{\mu}\dot{W}_{t},\qquad v^{\mu}(0)=v_{0}\in\R^{n}.
\end{array}
\right.
\end{eqnarray}
The Smoluchowski-Kramers approximation assures that equation \eqref{eq1} converges in some sense to the following Smoluchowski equation \cite{Mark Freidlin}, as $\mu\to 0$,
\begin{eqnarray}\label{eq3}
\dot{x}(t)=\frac{1}{\gamma}b(x(t),\vartheta)+\frac{\sigma}{\gamma}\dot{W}_{t},\qquad x(0)=x_{0}\in\R^{n}.
\end{eqnarray}
By revising the proof in \cite{Edward Nelson} (page $59$), we have
\begin{eqnarray}\label{lim}
\lim_{\mu\to0}x^{\mu}(t)=x(t)\qquad \mbox{a.s.,}
\end{eqnarray}
 for all $v_{0}\in\R^{n}$, uniformly for $t$ in compact subintervals of $[0,\infty)$.

Suppose we have available the observations $\{x^{\mu}(t_{k})\}_{k=1}^{n}$ on time interval $[0,T]$ with partition $\Delta_{n}=\{0=t_{1}<\cdots<t_{k}<\cdots<t_{n}=T\}$, $\Delta_{k}t=t_{k+1}-t_{k}$ $(k=1,2,\cdots,n-1)$ and $T=\Delta n^{\frac{1}{2}}$ with a positive constant $\Delta$ \cite{Kasonga}. We can estimate $\vartheta$ by the first order $n$-dimensional system \eqref{eq3}. We devise a least square estimator for $\vartheta$ in equation \eqref{eq1} by equation \eqref{eq3}. \\
Define the objective function
\begin{eqnarray}\label{func1}
F_{n}^{\mu}(\vartheta)=\sum_{k=1}^{n}\frac{||x^{\mu}(t_{k})-x^{\mu}(t_{k-1})-\frac{1}{\gamma}b(x^{\mu}(t_{k-1}),\vartheta)\Delta_{k}t||^{2}}{\Delta_{k}t}.
\end{eqnarray}
Then, as $\mu\to0$, this objective function will tend almost surely to the following function
\begin{eqnarray}\label{func2}
F_{n}(\vartheta)=\sum_{k=1}^{n}\frac{||x(t_{k})-x(t_{k-1})-\frac{1}{\gamma}b(x(t_{k-1}),\vartheta)\Delta_{k}t||^{2}}{\Delta_{k}t}.
\end{eqnarray}
This is the objective function of the least square estimation for equation \eqref{eq3} with observations $\{x(t_{k})\}_{k=1}^{n}$. We denote the true parameter value as $\vartheta^{0}$, and the unique minimizer for $F_{n}(\vartheta)$ as $\vartheta_{n}$. Then under suitable conditions, this estimator possesses the consistency property \cite{Kasonga}
$$\vartheta_{n}\to\vartheta^{0} \quad \mbox{a.s.} \quad \mbox{as } n\to\infty.$$
{\bf Let $\vartheta_{n}^{\mu}$ minimize $F_{n}^{\mu}(\vartheta)$. We want to infer the consistency property
$$\vartheta_{n}^{\mu}\to\vartheta^{0} \quad \mbox{a.s.} \quad \mbox{as } \mu\to0,\, n\to\infty.$$}
To this end, we make the following assumptions as in  \cite{Kasonga}:
\begin{itemize}
\item [A1.] The parameter space $\Theta$ is a compact subspace of $\R$.
\item [A2.] The drift term $b(x,\vartheta)$ is Lipschitz in both variables, that is, there exist continuous functions $K(\cdot)$ and $C(\cdot)$ such that
$$||b(x,\vartheta)-b(y,\vartheta)||_{\R^{n}}\leq|K(\vartheta)|\cdot||x-y||_{\R^{n}},$$
$$||b(x,\vartheta)-b(x,\varphi)||_{\R^{n}}\leq|C(x)|\cdot|\vartheta-\varphi|,$$
for all $\vartheta, \varphi\in\Theta$, $x,v\in \R^{n}$. Here
$$\sup_{\vartheta\in\Theta}|K(\vartheta)|=K<\infty,\mbox{ and } \E|C(x_{0})|^{m}=C_{m}<\infty\quad \mbox{for some } m>16,$$
with $\E$ the expectation with respect to $\P$.\\
The following growth condition is also satisfied for some constant $L$.
$$||b(x,\vartheta)||_{\R^{n}}^{2}\leq L(1+||x||_{\R^{n}}^{2}).$$
\item [A3.] The process $x_{t}=(x_{t},t\geq0)$ is stationary and ergodic with
$$\E||x_{0}||_{\R^{n}}^{m}<\infty\quad \mbox{for some } m>16.$$
\item [A4.] Recall that $\vartheta^{0}$ is the true parameter value in \eqref{eq1}. Then
$$\E||b(x_{0},\vartheta)-b(x_{0},\vartheta^{0})||_{\R^{n}}^{2}=0\quad\mbox{ iff }\quad \theta=\theta^{0}.$$
\end{itemize}
We need the following definitions to prove the consistency \cite{Gianni Dal Maso}.
\begin{Definition}
We say that a subset $K$ of a topological space $X$ is countably compact if every sequence in $K$ has at least a cluster point in $K$.
\end{Definition}
\begin{Definition}
We say that a function is coercive on topological space $X$, if the closure of the set $\{F\leq a\}$ is countably compact in $X$ for every $a\in\R$.
\end{Definition}
\begin{Definition}
We say that the sequence $(F_{h})$ is equi-coercive for $h\in\N$ (the natural number set) on topological space $X$, if for every $a\in\R$ there exists a closed countably compact subset $K_{a}$ of $X$ such that $\{F_{h}\leq a\}\subset K_{a}$ for every $h\in\N$.
\end{Definition}

\section{Consistency}
We first prove the $\Gamma$-convergence for the objective function $F_{n}^{\mu_{k}}(\vartheta)$ with respect to every sequence $(\mu_{k})$ tending to $0$. We denote the extended real line as $\overline{\R}=\R\cup\{\pm\infty\}$. Recall the definition about $\Gamma$-convergence \cite{Gianni Dal Maso}.
\begin{Definition}
The $\Gamma$-lower limit and the $\Gamma$-upper limit of the sequence $(F_{h})$ are the functions from $X$ into $\overline{\R}$ defined by
$$(\Gamma\mbox{-}\liminf_{h\to\infty}F_{h})(x)=\sup_{U\in\mathcal{N}(x)}\liminf_{h\to\infty}\inf_{y\in U} F_{h}(y),$$
$$(\Gamma\mbox{-}\limsup_{h\to\infty}F_{h})(x)=\sup_{U\in\mathcal{N}(x)}\limsup_{h\to\infty}\inf_{y\in U} F_{h}(y),$$
where $U$ is an open subset in $X$ and $\mathcal{N}(x)$ denotes the set of all open neighbourhoods of $x$ in $X$.\\
If there exists a function $F:X\to\overline{\R}$ such that $\Gamma\mbox{-}\liminf_{h\to\infty}F_{h}=\Gamma\mbox{-}\limsup_{h\to\infty}F_{h}=F$, then we write $F=\Gamma\mbox{-}\lim_{h\to\infty}F_{h}$ and we say that the sequence $F_{h}$ $\Gamma$-converges to $F$ (in $X$) or that $F$ is the $\Gamma$-limit of $(F_{h})$ (in $X$).
\end{Definition}
\begin{Lemma}\label{pp1}
Under the assumption A2, and for $n\in\N$,
$F_{n}^{\mu}(\vartheta)$ {\bf $\Gamma$-converges} to $F_{n}(\vartheta)$ a.s., as $\mu\to0$.
\end{Lemma}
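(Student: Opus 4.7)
The plan is to upgrade the pointwise convergence $F_n^\mu(\vartheta)\to F_n(\vartheta)$ (already noted after \eqref{func2}) to almost-sure \emph{uniform} convergence on $\Theta$, and then invoke the standard fact from $\Gamma$-convergence theory that uniform convergence of continuous functions on a metrizable space implies $\Gamma$-convergence to the same limit. Since $\Theta\subset\R$ is compact and first countable, $\Gamma$-convergence can be checked by the usual sequential liminf/limsup criterion, and uniform convergence plus continuity of the limit yields both inequalities immediately.

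First I would fix $n$ and a sample point outside a null set on which \eqref{lim} holds uniformly on $[0,T]$. Writing
\[
A_k^\mu(\vartheta)=x^\mu(t_k)-x^\mu(t_{k-1})-\tfrac{1}{\gamma}b(x^\mu(t_{k-1}),\vartheta)\Delta_k t,
\]
and $A_k(\vartheta)$ analogously with $x$ in place of $x^\mu$, I would expand
\[
F_n^\mu(\vartheta)-F_n(\vartheta)=\sum_{k=1}^{n}\frac{1}{\Delta_k t}\bigl\langle A_k^\mu(\vartheta)-A_k(\vartheta),\,A_k^\mu(\vartheta)+A_k(\vartheta)\bigr\rangle.
\]
The Lipschitz assumption in A2 gives, uniformly in $\vartheta\in\Theta$,
\[
\|A_k^\mu(\vartheta)-A_k(\vartheta)\|\le \bigl(1+\tfrac{K\Delta_k t}{\gamma}\bigr)\max_{j\in\{k-1,k\}}\|x^\mu(t_j)-x(t_j)\|,
\]
which tends to $0$ a.s.\ by \eqref{lim}. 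The growth bound in A2, together with the uniform a.s.\ boundedness of $x^\mu$ on $[0,T]$ for small $\mu$ (again by \eqref{lim} applied to the finite sample path), bounds $\|A_k^\mu(\vartheta)\|+\|A_k(\vartheta)\|$ by a deterministic constant, again uniformly in $\vartheta$. Summing the $n$ terms yields $\sup_{\vartheta\in\Theta}|F_n^\mu(\vartheta)-F_n(\vartheta)|\to 0$ a.s.\ as $\mu\to 0$.

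Next I would verify that $F_n$ is continuous on $\Theta$: this follows from the second Lipschitz estimate in A2, since
\[
|F_n(\vartheta)-F_n(\varphi)|\le \tfrac{2}{\gamma}\sum_{k=1}^{n}\bigl(\|A_k(\vartheta)\|+\|A_k(\varphi)\|\bigr)\,|C(x(t_{k-1}))|\,|\vartheta-\varphi|.
\]
Finally, to conclude $\Gamma$-convergence, for any $\vartheta\in\Theta$ and $\vartheta^\mu\to\vartheta$ one has
\[
|F_n^\mu(\vartheta^\mu)-F_n(\vartheta)|\le \sup_{\varphi\in\Theta}|F_n^\mu(\varphi)-F_n(\varphi)|+|F_n(\vartheta^\mu)-F_n(\vartheta)|\to 0
\]
a.s., so both the $\Gamma$-liminf inequality (using any approaching sequence) and the $\Gamma$-limsup inequality (take $\vartheta^\mu\equiv\vartheta$ as the recovery sequence) hold with common value $F_n(\vartheta)$.

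The main obstacle is the uniform-in-$\vartheta$ control of the cross terms: one must be sure that the a.s.\ convergence in \eqref{lim} is strong enough to kill the supremum over $\Theta$ for each of the $n$ summands. This is where compactness of $\Theta$ together with the \emph{uniform} Lipschitz constant $K=\sup_\vartheta|K(\vartheta)|<\infty$ from A2 is essential; without it the Lipschitz bound on $\|A_k^\mu(\vartheta)-A_k(\vartheta)\|$ would depend on $\vartheta$ and the argument would collapse. Everything else is routine.
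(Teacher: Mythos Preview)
Your proposal is correct and follows essentially the same route as the paper: establish almost-sure uniform convergence of $F_n^\mu$ to $F_n$ on $\Theta$ via the Lipschitz bound in A2 and the pathwise convergence \eqref{lim}, note that $F_n$ is continuous, and conclude $\Gamma$-convergence. The only cosmetic differences are that you use the polarization identity $\|A\|^2-\|B\|^2=\langle A-B,A+B\rangle$ (slightly more careful than the paper's direct triangle-inequality estimate) and verify the $\Gamma$-liminf/limsup inequalities by hand, whereas the paper simply invokes Proposition~5.2 of Dal~Maso.
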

\begin{proof}
For every $n\in\N$, the convergence \eqref{lim} holds uniformly on $[0,T]$. This implies that, for every $\varepsilon>0$, there exists a positive $\mu_{0}$, such that for $\mu\in(0,\mu_{0})$,
$$||x^{\mu}(t_{k})-x(t_{k})||<\varepsilon \qquad \mbox{a.s.,}\quad \mbox{ for } k=1,2,\dots,n.$$
Hence for every $n\in\N$, and the preceding $\mu$,
\begin{eqnarray*}
\begin{aligned}
&\sup_{\vartheta\in\Theta}|F_{n}^{\mu}(\vartheta)-F_{n}(\vartheta)|\\
\leq&\sup_{\vartheta\in\Theta}\bigg\{\sum_{k=1}^{n}\frac{1}{\Delta_{k}t}\bigg[||x^{\mu}(t_{k})-x(t_{k})||+||x^{\mu}(t_{k-1})-x(t_{k-1})||+\frac{\Delta_{k}t}{\gamma}||b(x^{\mu}(t_{k-1}),\vartheta)-b(x(t_{k-1}),\vartheta)||\bigg]^{2}\bigg\}\\
\leq&\sup_{\vartheta\in\Theta}\bigg\{\sum_{k=1}^{n}\frac{1}{\Delta_{k}t}(\varepsilon+\varepsilon+\frac{\Delta_{k}t}{\gamma}|K(\vartheta)|\varepsilon)\bigg\}\\
\leq&M_{n}\varepsilon,
\end{aligned}
\end{eqnarray*}
for a positive constant $M_{n}$ by the assumption A2. Thus  for every $n\in\N$,
$$F_{n}^{\mu}(\vartheta)\to F_{n}(\vartheta)\quad \mbox{a.s.} \mbox{ as }\mu\to0,\mbox{ uniformly for } \vartheta\in\Theta.$$
Note that the drift $b(x,\vartheta)$ is a continuous function of $\vartheta$, so is $F_{n}(\vartheta)$. Hence $F_{n}(\vartheta)$ is a lower semi-continuous function on $\vartheta$.  According to Proposition $5.2$ in \cite{Gianni Dal Maso}, we infer the result.
\end{proof}
For the corresponding minimizers' convergence, we still need the equi-coercive condition for the objective functions  $F_{n}^{\mu_{k}}(\vartheta)$.
\begin{Lemma}\label{pp2}
For every $n\in\N$, $F_{n}^{\mu_{k}}(\vartheta)$ is {\bf equi-coercive} with respect to $k\in\N$ for every sequence $(\mu_{k})$ satisfying $\mu_{k}\to0$.
\end{Lemma}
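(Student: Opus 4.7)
The plan is to observe that equi-coercivity in this setting is essentially forced by Assumption A1, so no genuine analysis of the functional $F_n^{\mu_k}$ is required. Recall the definition: we must produce, for every $a\in\R$, a closed countably compact subset $K_a\subseteq \Theta$ with $\{F_n^{\mu_k}\leq a\}\subseteq K_a$ for all $k\in\N$. Since Assumption A1 guarantees that $\Theta$ is a compact subset of $\R$, and $\R$ is a metric (hence first-countable, Hausdorff) space, compactness of $\Theta$ coincides with sequential compactness and therefore with countable compactness.

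With this in hand, I would simply take $K_a := \Theta$ for every $a\in\R$. This set is closed (being compact in a Hausdorff space) and countably compact by the remark above. Since $F_n^{\mu_k}$ is defined on $\Theta$, we trivially have $\{\vartheta\in\Theta : F_n^{\mu_k}(\vartheta)\leq a\}\subseteq \Theta = K_a$ for every $k\in\N$ and every $a\in\R$. This is exactly the equi-coercivity condition in Definition 2.3 applied to the sequence $(F_n^{\mu_k})_{k\in\N}$ for fixed $n$.

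The only subtlety worth flagging explicitly is that, strictly speaking, the objective functions $F_n^{\mu_k}$ are random (they depend on $\omega\in\Omega$ through the observations $x^{\mu_k}(t_j,\omega)$), but since the bound $\{F_n^{\mu_k}\leq a\}\subseteq\Theta$ holds pointwise in $\omega$, the equi-coercivity is valid for every sample path without any exceptional null set, and in particular almost surely. There is no genuine obstacle here: the compactness of the parameter space in A1 was precisely tailored so that coerciveness and equi-coerciveness for any function on $\Theta$ become automatic, and the real work is confined to the $\Gamma$-convergence statement of Lemma 3.2.
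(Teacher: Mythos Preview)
Your argument is correct. Once Assumption A1 gives compactness of $\Theta\subset\R$, the choice $K_a:=\Theta$ is closed and countably compact (compactness, sequential compactness, and countable compactness coincide in metric spaces), and the containment $\{F_n^{\mu_k}\le a\}\subseteq\Theta$ is automatic. The pointwise-in-$\omega$ remark is also correct and worth making.

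This is a genuinely different route from the paper's. The paper does not invoke A1 directly; instead it uses the uniform convergence established in Lemma~\ref{pp1} to bound $F_n^{\mu_k}(\vartheta)>F_n(\vartheta)-\varepsilon$ for small $\mu_k$, argues that $F_n-\varepsilon$ is continuous and hence coercive and lower semicontinuous on $\Theta$, and then appeals to Proposition~7.7 of Dal~Maso (equi-coercivity via a common coercive l.s.c.\ minorant). That argument is more elaborate and, as written, only covers $\mu_k\in(0,\mu_0)$, leaving finitely many initial terms to be handled separately; moreover, the step ``closed sublevel set $\Rightarrow$ countably compact'' already tacitly uses the compactness of $\Theta$, i.e.\ A1. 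Your approach cuts straight to the point: on a compact parameter space \emph{any} family of functions is equi-coercive, so no analysis of $F_n^{\mu_k}$ is needed here and the substantive content of the section is entirely in Lemma~\ref{pp1}. The paper's route would only offer an advantage in a setting where $\Theta$ is non-compact but one still has a coercive minorant; in the present framework your argument is both simpler and cleaner.
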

\begin{proof}
For every $\varepsilon>0$, there exists a $\mu_{0}>0$ such that, for $\mu_{k}\in(0,\mu_{0})$,
$$F^{\mu_{k}}_{n}(\vartheta)>F_{n}(\vartheta)-\varepsilon.$$
The function $F_{n}(\vartheta)-\varepsilon$ is continuous for $\vartheta$ from $\Theta\subset\R$ to $\overline{\R}$. Therefore $\{F_{n}(\vartheta)-\varepsilon\leq a\}$ is a closed set in space $\Theta$ for any $a\in\R$, further it is also a countably compact set. That is, $F_{n}(\vartheta)$ is a coercive function and a lower semi-continuous function for $\vartheta$. According to Proposition $7.7$ in \cite{Gianni Dal Maso}, we infer that the sequence $(F^{\mu_{k}}_{n}(\vartheta))$ is equi-coercive.
\end{proof}
\begin{Theorem}\label{th}
Under the assumptions A1, A2, A3 and A4, the {\bf consistency} property holds, i.e.
$$\vartheta_{n}^{\mu}\to\vartheta^{0} \quad\mbox{a.s.} \mbox{ as } \mu\to0,\, n\to\infty.$$
\end{Theorem}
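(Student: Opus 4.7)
The plan is to combine the two preceding lemmas with the classical consistency result of Kasonga via the fundamental theorem of $\Gamma$-convergence and a diagonalisation in $n$. Fix $n\in\N$ and let $(\mu_{k})$ be any sequence with $\mu_{k}\to0$. By Lemma \ref{pp1}, $F_{n}^{\mu_{k}}$ $\Gamma$-converges almost surely to $F_{n}$; by Lemma \ref{pp2}, the sequence $(F_{n}^{\mu_{k}})$ is equi-coercive on $\Theta$. A standard result from $\Gamma$-convergence theory (Corollary $7.20$ / Theorem $7.23$ in \cite{Gianni Dal Maso}) then yields both the convergence of the minimum values and the convergence of (some) minimizers: every cluster point of $(\vartheta_{n}^{\mu_{k}})$ is a minimizer of $F_{n}$. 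Since $F_{n}$ has a unique minimizer $\vartheta_{n}$, the full sequence of minimizers converges, so
$$\vartheta_{n}^{\mu_{k}}\to\vartheta_{n}\quad\mbox{a.s., as }\mu_{k}\to 0,$$
for every fixed $n\in\N$.

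Next I would invoke the classical almost sure consistency for the least squares estimator of the first order system \eqref{eq3} from \cite{Kasonga}, which under A1--A4 gives $\vartheta_{n}\to\vartheta^{0}$ almost surely as $n\to\infty$. Working on the intersection of the two probability one sets on which Lemma \ref{pp1} and Kasonga's theorem hold (a set of full measure), the proof is reduced to combining two deterministic limits.

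To pass from the iterated statement to a joint one, I would use a diagonal selection. For each $n\in\N$, choose $\mu(n)>0$ small enough that
$$|\vartheta_{n}^{\mu}-\vartheta_{n}|<\tfrac{1}{n}\quad\mbox{for every }\mu\in(0,\mu(n)],$$
almost surely; this is possible pathwise by the previous paragraph. Then for any sequence $\mu_{n}\in(0,\mu(n)]$ (in particular $\mu_{n}\to 0$), the triangle inequality gives
$$|\vartheta_{n}^{\mu_{n}}-\vartheta^{0}|\leq|\vartheta_{n}^{\mu_{n}}-\vartheta_{n}|+|\vartheta_{n}-\vartheta^{0}|<\tfrac{1}{n}+|\vartheta_{n}-\vartheta^{0}|\longrightarrow 0\quad\mbox{a.s.,}$$
which is the desired joint consistency.

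The main obstacle is the last step: the statement $\mu\to 0,\,n\to\infty$ is a two parameter limit, and a priori Lemma \ref{pp1} only delivers convergence in $\mu$ for each \emph{fixed} $n$, with no rate uniform in $n$. The diagonal construction above is what turns this into a genuine joint limit, but it forces $\mu$ to go to zero sufficiently fast relative to $n$ (the dependence $\mu(n)$ is random, determined by the pathwise rate in Lemma \ref{pp1} and the growth of the constant $M_{n}$ appearing in its proof). A cleaner statement would be that for every sequence $(\mu_{n},n)$ with $\mu_{n}\to 0$ fast enough, $\vartheta_{n}^{\mu_{n}}\to\vartheta^{0}$ almost surely; one must be content with this coupled form rather than an unconstrained double limit.
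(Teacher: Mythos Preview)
Your core argument is the same as the paper's: combine Lemma~\ref{pp1} ($\Gamma$-convergence) and Lemma~\ref{pp2} (equi-coercivity) with the fundamental theorem of $\Gamma$-convergence in \cite{Gianni Dal Maso} to get $\vartheta_{n}^{\mu}\to\vartheta_{n}$ a.s.\ for each fixed $n$, and then append Kasonga's result $\vartheta_{n}\to\vartheta^{0}$. The paper cites Corollary~7.24 rather than 7.20/7.23, and exploits the uniqueness of $\vartheta_{n}$ implicitly via that corollary, but the mechanism is identical.

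Where you differ is in rigor about the final step. The paper simply writes the two iterated limits and then says ``we infer the consistency property'', with no diagonal construction and no discussion of how a limit in $\mu$ for each fixed $n$ followed by a limit in $n$ becomes a joint limit $\mu\to0,\,n\to\infty$. Your observation that this passage requires some care --- because the convergence rate in Lemma~\ref{pp1} depends on $n$ through the constant $M_{n}$ --- is correct, and your diagonal selection (choosing $\mu(n)$ small enough pathwise) is a standard and legitimate way to manufacture a genuine joint statement out of the iterated one. So your proof is not merely the same as the paper's; it is the paper's proof with the gap you yourself identify explicitly patched. Your caveat at the end, that the resulting statement is really about sequences $\mu_{n}\to0$ sufficiently fast relative to $n$ rather than an unconstrained double limit, is accurate and is a limitation shared by the paper (which simply does not raise the point).
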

\begin{proof}
By Corollary $7.24$ in \cite{Gianni Dal Maso},  together with Lemma \ref{pp1} and \ref{pp2}, we conclude that for every $n\in\N$ and every sequence $(\mu_{k})$,
$\vartheta_{n}^{\mu_{k}}\to\vartheta_{n} \mbox{ a.s.} \mbox{ as } \mu_{k}\to0.$
Hence for every $n\in\N$,
$$\vartheta_{n}^{\mu}\to\vartheta_{n} \quad\mbox{a.s.} \mbox{ as } \mu\to0.$$
Noting the following consistency from \cite{Kasonga},
$$\vartheta_{n}\to\vartheta^{0}\quad\mbox{a.s.} \mbox{ as } n\to+\infty,$$
we infer  the  consistency property.
\end{proof}
Hence for sufficiently large $n$, we can adjust the system parameter $\mu$ sufficiently small and estimate the value of $\vartheta^{0}$ in equation \eqref{eq1} by the observations $\{x^{\mu}(t_{k})\}_{k=1}^{n}$ and the equation \eqref{eq3}.

\section{An example}
The measurement about force is usually needed in various scientific systems \cite{Giovanni Volpe, Hottovy S, Thomas Brettschneider, Prashant Sinha}.
In an experimental system of a colloidal particle \cite{Giovanni Volpe}, the Smoluchowski-Kramers approximation provides a way to measure the force of the system \cite{Hottovy S1}.
 The motion of a particle with mass $\mu$ in the presence of thermal noise is governed by the following Newton's law \cite{Hottovy S1},
\begin{eqnarray}\label{ex1}
\left\{\begin{array}{l}
\dot{x}^{\mu}(t)=v^{\mu}(t),\quad\quad\quad\quad\quad\quad\quad\quad\quad\quad\quad\quad\quad\quad\quad\,\,\,\,\,\,\,\quad x^{\mu}(0)=0,\\
\mu\dot{v}^{\mu}(t)=F(x^{\mu}(t))-\frac{k_{B}T_{tem}}{D(x^{\mu}(t))}v^{\mu}(t)+\frac{\sqrt{2}k_{B}T_{tem}}{\sqrt{D(x^{\mu}(t))}}\dot{W}_{t},\qquad v^{\mu}(0)=v_{0}.\\
\end{array}
\right.
\end{eqnarray}
where $v$ is the velocity, $x$ is the position of the particle, $k_{B}$ is the Boltzmann constant, $T_{tem}$ is the absolute temperature of the fluid, $D(x)$ is a hydrodynamical diffusion gradient, and $\gamma=\frac{k_{B}T_{tem}}{D}$ is the friction coefficient satisfing the  fluctuation-dissipation relation $\sigma=\sqrt{2k_{B} T_{tem}\gamma}$. The measurement for  the velocity is difficult to obtain due to the irregular motion of the particle. But researchers can implement measurement about the position of trajectory of particle's motion. For example, a single particle evanescent light scattering technique known as total internal reflection microscopy has been used \cite{Giovanni Volpe}. The particle's trajectory is obtained from the scattering intensities, which depend on its position relative to the interface.  This measurement problem calls for a relationship between position and force instead of velocity and force. The Smoluchowski-Kramers approximation provides a theoretical basis for this \cite{Mark Freidlin}. Researchers can measure the force with the help of the movement of small mass particle, through the relation between position and force,
\begin{eqnarray}\label{ex2}
\dot{x}(t)=\frac{F(x(t))D(x(t))}{k_{B}T_{tem}}+D^{'}(x(t))+\sqrt{2D(x(t))}\dot{W}_{t},\qquad x(0)=0.
\end{eqnarray}
Volpe et.al \cite{Giovanni Volpe} experimentally studied a colloidal particle immersed in water and diffusing in a closed sample cell above a planar wall, placed at $x=0$. The conservative force acting on the particle is
$$F(x,\vartheta)=\vartheta e^{-\kappa x}-G_{eff},$$
where $\vartheta$ is a prefactor depending on the surface charge densities, and $\kappa^{-1}=18$ nm denotes the Debye length.  The term $G_{eff}=4/3\pi R^{3}(\rho_{p}-\rho_{s})g$
accounts for the effective gravitational contribution, where the diameter of the colloidal particle is $2R=1.31\pm0.04$ $\mu$m,
the density of the particle is $\rho_{p}=1.51$ g/cm$^{3}$, the density of water is $\rho_{s}=1.00$ g/cm$^{3}$, and the gravitational acceleration constant is $g$. The particle's trajectory perpendicular to the wall is sampled with nanometer resolution. Under constant diffusion, $\gamma$ and $\sigma$ are constant quantities. The movement equation is depicted by system \eqref{eq2},
\begin{eqnarray}\label{ex3}
\left\{\begin{array}{l}
\dot{x}^{\mu}(t)=v^{\mu}(t),\quad\quad\quad\quad\quad\quad\quad\quad\quad\quad\quad\quad\,\, x^{\mu}(0)=0,\\
\mu\dot{v}^{\mu}(t)=F(x^{\mu}(t),\vartheta)-\gamma v^{\mu}(t)+\sigma\dot{W}_{t},\qquad v^{\mu}(0)=v_{0}.\\
\end{array}
\right.
\end{eqnarray}
The mass of the particle is taken sufficiently small such that
\begin{eqnarray}\label{ex4}
\dot{x}(t)=\frac{1}{\gamma} F(x(t),\vartheta)+\frac{\sigma}{\gamma}\dot{W}_{t},\qquad x(0)=0.
\end{eqnarray}
Thus the main result Theorem \ref{th} holds. We use the objective function \eqref{func1}, to estimate $\vartheta$. We make the unit of physical quantity unification as g, nm, and s. The force is
\begin{eqnarray}\label{force}
F(x,\vartheta)=\vartheta\cdot e^{-\frac{x}{18}}-\frac{4}{3}\pi\cdot(\frac{1.31}{2})^2\cdot0.51\cdot9.8\cdot10^{-3}.
\end{eqnarray}
The simulation result is shown in Figure \ref{fig}, which indicates that our estimator is a good approximation for the true parameter value.

\begin{figure}[H]
\centering
\includegraphics[width=2.in]{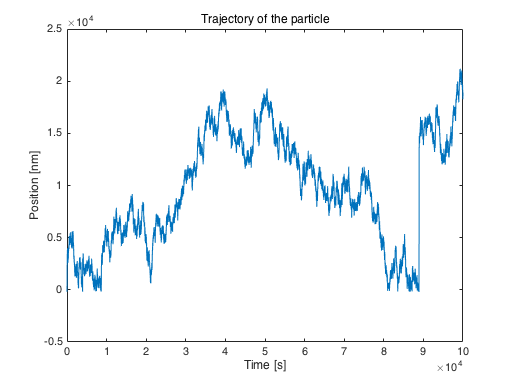}
\includegraphics[width=2.in]{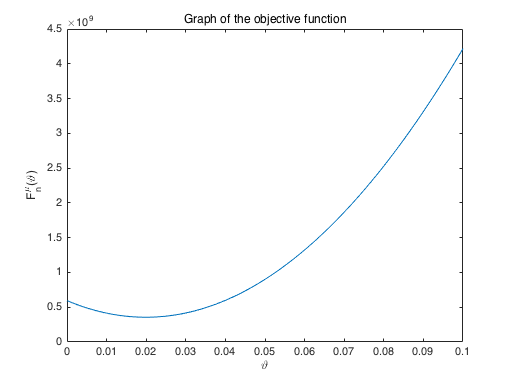}
\caption{Take $\gamma=\frac{1}{6}$, $\sigma=10$, $g=9.8$ m/s$^{2}$, $\theta^{0}=0.02$, $n=10^{5}$ and $\mu=0.001$:
(i)  The left subfigure is one trajectory of the particle under true parameter value $\vartheta^{0}$ simulated through Newton motion equation \eqref{ex3}.  (ii) The right subfigure is the graph of the objective function \eqref{func1} with drift force \eqref{force}. The simulation result for the parameter $\vartheta_{n}^{\mu}$ is $0.0200$.}\label{fig}
\end{figure}

\end{document}